\documentclass[12pt,reqno]{amsart}

\newcommand\version{March 23, 2015}


\usepackage{amsmath,amsfonts,amsthm,amssymb,amsxtra}

\usepackage[ansinew]{inputenc}

\usepackage{textcomp}

\usepackage{cite}



\setlength{\voffset}{-.7truein}
\setlength{\textheight}{8.8truein}
\setlength{\textwidth}{6.05truein}
\setlength{\hoffset}{-.7truein}


\newtheorem{theorem}{Theorem}
\newtheorem{proposition}[theorem]{Proposition}
\newtheorem{lemma}[theorem]{Lemma}

\theoremstyle{definition}

\theoremstyle{remark}




\renewcommand{\epsilon}{\varepsilon}

\newcommand{\loc}{{\rm loc}}

\newcommand{\N}{\mathbb{N}}

\renewcommand{\phi}{\varphi}
\newcommand{\rad}{{\rm rad}}
\newcommand{\R}{\mathbb{R}}

\DeclareMathOperator{\tr}{Tr}


\begin{document}

\title[Sharp eigenvalue bounds --- \version]{Sharp eigenvalue bounds on quantum star graphs}

\author{Semra Demirel--Frank}
\address{Semra Demirel--Frank, Mathematics 253-37, Caltech, Pasadena, CA 91125, USA}
\email{sdemirel@caltech.edu}

\begin{abstract}
We prove that the optimal constant in the Lieb--Thirring inequality on a star graph with $N$ edges coincides with that on $\R$ if $N$ is even. For odd $N$ we show that this property holds when restricting to radial potentials and we prove an almost optimal bound for general potentials.
\end{abstract}

\maketitle

\section{Introduction}

Recently there has been a lot of activity in a mathematical understanding of quantum graphs, which appear as idealized models of linear, network-shaped structures in mesoscopic physics. A large literature on the subject has arisen and we refer, for instance, to the bibliography given in \cite{BK, EKKST} and the textbook \cite{BeKu}. In particular, in the papers \cite{EFK,DH,FK,ExLaUs} bounds we derived on the discrete eigenvalues of Schr\"o\-din\-ger operators on metric graphs. In the present paper we will be interested in \emph{optimal constants} in such bounds for one of the simplest classes of metric graphs, namely \emph{star graphs}. By $\Gamma_N$ we denote $N$ half-lines $[0,\infty)$ with their endpoints $0$ identified. Thus, $\Gamma_N$ is a graph with a single vertex and $N$ edges.

We consider the Schr\"odinger operator
$$
H= -\frac{d^2}{dx^2} + V
\qquad\text{in}\ L_2(\Gamma_N)
$$
with a potential $V:\Gamma_N\to\R$. It is well-known that, if $V_-\in L_p(\Gamma_N)$ for some $p\geq 1$ and $V_+\in L_1^\loc(\Gamma_N)$, then the Schr\"odinger operator can be defined as a self-adjoint operator in $L_2(\Gamma_N)$ via the lower semi-bounded and closed quadratic form
$$
h[\psi]:=\int_{\Gamma_N} \left(|\psi'|^2 + V|\psi|^2\right) dx \,,
\qquad \psi\in H^1(\Gamma_N)\cap L_2(\Gamma_N,V_+\,dx) \,.
$$
By definition, a function $\psi$ on $\Gamma_N$ belongs to the Sobolev space $H^1(\Gamma_N)$ if its $N$ restrictions $\psi_1,\ldots,\psi_N$ to the edges of $\Gamma_N$ belong to $H^1(0,\infty)$ and if their values at the vertex coincide. This definition of the Schr\"odinger operator via quadratic forms gives rise, in a generalized sense, to the so-called \emph{Kirchhoff boundary conditions} at the vertex,
$$
\sum_{j=1}^N \psi_j'(0+) = 0 \,.
$$
Moreover, the condition $V_-\in L_p(\Gamma_N)$ with $p<\infty$ guarantees that the negative spectrum of the Schr\"odinger operator consists of discrete eigenvalues of finite multiplicities. As usual, we write $\tr H_-^\gamma$ for the sum of the $\gamma$-th power of the absolute values of the negative eigenvalues of $H$.

One can prove \cite{EFK} that for any $\gamma\geq 1/2$ there is a constant $L_{\gamma,N}$ such that
\begin{equation}
\label{eq:ltstar}
\tr H_-^\gamma \leq L_{\gamma,N} \int_{\Gamma_N} V_-^{\gamma+1/2}\,dx \,.
\end{equation}
In the following, we will denote by $L_{\gamma,N}$ the \emph{optimal} (that is, smallest possible) value of the constant in \eqref{eq:ltstar}. We are interested in characterizing this value and, in particular, in relating it to $L_{\gamma,2}=:L_\gamma$ for $\Gamma_2=\R$, that is, the optimal constant in the inequality
\begin{equation}
\label{eq:lt}
\tr \left( -\frac{d^2}{dx^2} + V \right)_-^\gamma \leq L_{\gamma} \int_{\R} V_-^{\gamma+1/2}\,dx \,.
\end{equation}
Finding the optimal constant in \eqref{eq:lt} is a famous open problem due to Lieb and Thirring \cite{LT}. What is currently known is that
\begin{equation}
\label{eq:ltknown}
L_{1/2}= 1/4 
\qquad\text{and}\qquad
L_\gamma = (4\pi)^{-1/2} \frac{\Gamma(\gamma+1)}{\Gamma(\gamma+3/2)}
\ \text{if}\ \gamma\geq 3/2 \,;
\end{equation}
see \cite{HLThom, LT} and also \cite{LW,H} for a review and results in higher dimensions.

By taking a compactly supported almost-optimal potential for \eqref{eq:lt} and transplanting it very far out on a single edge of $\Gamma_N$ it is easy to see that
\begin{equation}
\label{eq:comparision0}
L_{\gamma,N} \geq L_\gamma
\qquad\text{for all}\ \gamma\geq 1/2 \ \text{and all}\ N\in\N \,.
\end{equation}
Thus, in the following we will be interested in \emph{upper bounds} on $L_{\gamma,N}$.

In \cite{DH} we have shown that
\begin{equation}
\label{eq:dh}
L_{\gamma,N} = L_\gamma
\qquad\text{for all}\ \gamma\geq 2 \ \text{and all}\ N\in\N \,.
\end{equation}
In fact, this equality is valid for a large number of graphs, but, remarkably, not for all graphs; see \cite{DH} for an explicit counterexample. As far as we know, there are no optimal results on Lieb--Thirring constants on quantum graphs apart from \eqref{eq:dh}. We emphasize that the proof in \cite{DH} proceeds by showing $L_{\gamma,N}\leq (4\pi)^{-1/2}\, \Gamma(\gamma+1)/\Gamma(\gamma+3/2)$ directly, without comparing $L_{\gamma,N}$ to $L_\gamma$.

In this paper we shall do exactly the latter, namely, we find a comparison method to relate $L_{\gamma,N}$ to $L_\gamma$, without needing to know the explicit value of $L_\gamma$. This allows us to settle the problem completely for even $N$ as well as, under a symmetry assumption, for odd $N$. The following two theorems are our main results.

\begin{theorem}\label{th1}
\label{main}
Let $\gamma\geq 1/2$. If $N$ is even, then
$$
L_{\gamma,N} = L_\gamma
$$
and, if $N$ is odd, then
$$
L_{\gamma,N} \leq \frac{N+1}{N}\, L_\gamma \,,
$$
where $L_{\gamma,N}$ and $L_\gamma$ are the optimal constants in \eqref{eq:ltstar} and \eqref{eq:lt}, respectively.
\end{theorem}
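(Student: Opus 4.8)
The plan is to compare the Kirchhoff operator $H$ on $\Gamma_N$ with a direct sum of Schr\"odinger operators on full lines, obtained by gluing the half-line edges in pairs. Fix a pairing of the edges and, for each pair $(i,j)$, identify the two half-lines with a single copy of $\R$ by setting $\Psi(x)=\psi_i(x)$ for $x\ge 0$ and $\Psi(x)=\psi_j(-x)$ for $x\le 0$. Under this unitary identification the free operator $-d^2/dx^2$ on $\R$, together with the glued potential, corresponds to the pair of edges subject \emph{only} to the matching condition $\psi_i(0)=\psi_j(0)$; the relation $\psi_i'(0+)+\psi_j'(0+)=0$ is the natural boundary condition and is not imposed on the form domain. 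Let $\widetilde H$ denote the resulting direct sum of full-line operators. Its form is the same expression $h$ as that of $H$, but its form domain requires continuity only \emph{within} each pair, whereas the form domain of $H$ requires $\psi_1(0)=\cdots=\psi_N(0)$, a strictly stronger condition.

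Since the form domain of $H$ is contained in that of $\widetilde H$ and the two forms agree there, the min--max principle yields $H\ge\widetilde H$, and hence $\tr H_-^\gamma\le\tr\widetilde H_-^\gamma$. For even $N=2M$ I would pair all edges, so $\widetilde H$ is a direct sum of $M$ full-line operators; applying the sharp one-dimensional inequality \eqref{eq:lt} on each copy of $\R$, and noting that the glued potential contributes exactly $\int_0^\infty (V_i)_-^{\gamma+1/2}+\int_0^\infty(V_j)_-^{\gamma+1/2}$ to its copy (where $V_j$ is the restriction of $V$ to edge $j$), gives $\tr H_-^\gamma\le L_\gamma\int_{\Gamma_N}V_-^{\gamma+1/2}$, i.e.\ $L_{\gamma,N}\le L_\gamma$. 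Combined with \eqref{eq:comparision0} this proves $L_{\gamma,N}=L_\gamma$ for even $N$.

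For odd $N=2M+1$ one edge is necessarily left unpaired, and the freedom to let the pairing depend on $V$ is what produces the factor $(N+1)/N$. I would single out the edge, say edge $N$, that \emph{minimizes} $\int_0^\infty (V_j)_-^{\gamma+1/2}$, pair the remaining $2M$ edges into full lines as above, and leave edge $N$ as a Neumann half-line (again the natural condition, so the inclusion of form domains and the inequality $H\ge\widetilde H$ persist). The $M$ full lines contribute $L_\gamma\sum_{j\ne N}\int_0^\infty(V_j)_-^{\gamma+1/2}$. For the Neumann edge I would reflect $V_N$ to an even potential on $\R$; the Neumann half-line operator is unitarily the even part of the corresponding full-line operator, so its negative spectrum is dominated by that of the full line and \eqref{eq:lt} bounds its contribution by $2L_\gamma\int_0^\infty(V_N)_-^{\gamma+1/2}$. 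Adding these,
\[
\tr H_-^\gamma\le L_\gamma\int_{\Gamma_N}V_-^{\gamma+1/2}+L_\gamma\int_0^\infty(V_N)_-^{\gamma+1/2}.
\]
Since edge $N$ was chosen to minimize the edge integral, $\int_0^\infty(V_N)_-^{\gamma+1/2}\le \tfrac1N\int_{\Gamma_N}V_-^{\gamma+1/2}$ because the minimum is at most the average, and this yields $L_{\gamma,N}\le\tfrac{N+1}{N}L_\gamma$.

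The main obstacle is getting the \emph{direction} of the operator inequality right: the gain comes precisely from the fact that Kirchhoff continuity is stronger than the pairwise continuity produced by gluing, so that $H\ge\widetilde H$ rather than the reverse. A naive decoupling of all edges into Neumann half-lines would only give $L_{\gamma,N}\le 2L_\gamma$; the whole improvement is in recognizing that pairs of edges glue into genuine full lines carrying the sharp constant $L_\gamma$. For odd $N$ the only extra ingredient is the $V$-dependent choice of the unpaired edge, after which the minimum-versus-average estimate produces the stated factor.
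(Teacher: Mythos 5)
Your proposal is correct and follows essentially the same route as the paper: pairwise gluing of edges into full lines via enlargement of the form domain, the sharp one-dimensional inequality \eqref{eq:lt} on each glued line, and the symmetric-reflection bound $2L_\gamma$ for the leftover Neumann half-line in the odd case. The only cosmetic difference is that in the odd case you pick the edge minimizing $\int_0^\infty (V_j)_-^{\gamma+1/2}$ and use that the minimum is at most the average, whereas the paper averages the resulting inequality over all $N$ choices of the unpaired edge; these two steps are arithmetically equivalent.
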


\noindent
\emph{Remarks.} 
(1) For even $N$, this theorem together with \eqref{eq:ltknown} yields explicitly the optimal constant for $\gamma=1/2$ and $\gamma\geq 3/2$. This improves our earlier bound from \cite{DH} for $\gamma\geq 2$. We emphasize that none of the methods used to prove \eqref{eq:ltknown} seem to generalize in an obvious way to $\Gamma_N$.\\
(2) A variant of our proof shows that if $L_{\gamma,N_0}=L_\gamma$ for some odd $N_0$, then $L_{\gamma,N}=L_\gamma$ for all $N\geq N_0$; see Proposition \ref{mono}.\\
(3) For $N=1$, our bound states $L_{\gamma,1}\leq 2L_\gamma$. The proof of Lemma \ref{neumann} shows that this bound is optimal as long as the optimal potential for $L_\gamma$ has a single bound state. This holds, in particular, for $\gamma=1/2$.\\
(4) For odd $N\geq 3$ our bound uses the bound $L_{\gamma,1}\leq 2 L_\gamma$ for $N=1$. If the latter bound can be improved for some (large) $\gamma$, then also our bounds for arbitrary odd $N\geq 3$ improve automatically. 

\medskip

We call a function $V$ on $\Gamma_N$ \emph{radial} if the value of $V(x)$ depends only on the distance of $x$ from the vertex of $\Gamma_N$. Let us denote by $L_{\gamma,N}^{(\rad)}$ the optimal constant in \eqref{eq:ltstar} \emph{when restricted to radial functions $V$}.

\begin{theorem}\label{th2}
\label{main2}
Let $\gamma\geq 1/2$. For any $N\geq 2$,
$$
L_{\gamma,N}^{(\rad)} = L_\gamma \,,
$$
where $L_{\gamma,N}^{(\rad)}$ is the optimal constant in the radial version of \eqref{eq:ltstar} and $L_\gamma$ is the optimal constant in \eqref{eq:lt}.
\end{theorem}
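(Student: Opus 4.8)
The plan is to exploit the permutation symmetry of the radial problem to split $H$ into half-line operators, and then to compare each piece with the problem on $\R$ without ever computing $L_\gamma$. Since $V$ is radial, let $v$ denote its common restriction to a single edge and set $I:=\int_0^\infty v_-^{\gamma+1/2}\,dx$, so that $\int_{\Gamma_N}V_-^{\gamma+1/2}\,dx=NI$. The symmetric group acts on $L_2(\Gamma_N)$ by permuting the $N$ edges, and $H$ commutes with this action. I would split $L_2(\Gamma_N)$ into the symmetric subspace, consisting of functions equal on all edges, and its orthogonal complement, consisting of functions whose edge-components sum to zero pointwise. On the symmetric subspace the Kirchhoff condition $\sum_j\psi_j'(0+)=0$ reads $Nf'(0+)=0$, i.e.\ a Neumann condition, so one copy of the half-line operator $h_{\mathrm N}:=-d^2/dx^2+v$ with Neumann boundary condition appears; on the complement, continuity together with the sum-zero constraint forces the common vertex value to vanish, so $N-1$ copies of the Dirichlet realization $h_{\mathrm D}$ appear. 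Checking that the quadratic form splits accordingly (the cross terms drop out because the antisymmetric components sum to zero) yields
\[
\tr H_-^\gamma=\tr (h_{\mathrm N})_-^\gamma+(N-1)\,\tr (h_{\mathrm D})_-^\gamma .
\]

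Next I would establish two half-line estimates. First, extending $v$ evenly to $\R$ by $W(x):=v(|x|)$, the even eigenfunctions of $-d^2/dx^2+W$ are exactly the Neumann eigenfunctions of $h_{\mathrm N}$ and the odd ones the Dirichlet eigenfunctions of $h_{\mathrm D}$; applying \eqref{eq:lt} to $W$ gives $\tr (h_{\mathrm N})_-^\gamma+\tr (h_{\mathrm D})_-^\gamma\le L_\gamma\int_\R W_-^{\gamma+1/2}\,dx=2L_\gamma I$. Second, for the Dirichlet piece alone I would instead extend $v$ by zero, setting $\widetilde v(x):=v(x)$ for $x>0$ and $\widetilde v(x):=0$ for $x<0$: imposing an extra Dirichlet condition at the origin only raises eigenvalues (form-domain restriction) and decouples $\R$ into two half-lines, the left one carrying no negative spectrum since $\widetilde v=0$ there, whence $\tr (h_{\mathrm D})_-^\gamma\le\tr(-d^2/dx^2+\widetilde v)_-^\gamma\le L_\gamma\int_\R \widetilde v_-^{\gamma+1/2}\,dx=L_\gamma I$. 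Combining the two,
\[
\tr H_-^\gamma=\big[\tr (h_{\mathrm N})_-^\gamma+\tr (h_{\mathrm D})_-^\gamma\big]+(N-2)\,\tr (h_{\mathrm D})_-^\gamma\le 2L_\gamma I+(N-2)L_\gamma I=NL_\gamma I,
\]
which is exactly the bound $L_{\gamma,N}^{(\rad)}\le L_\gamma$. For the matching lower bound I would take a near-optimizer $\widehat v$ for \eqref{eq:lt} and place a translated copy far from the vertex on each edge, $v(x)=\widehat v(x-R)$ with $R$ large; as $R\to\infty$ the vertex condition becomes invisible to the localized bound states, so both $\tr (h_{\mathrm N})_-^\gamma$ and $\tr (h_{\mathrm D})_-^\gamma$ converge to $\tr(-d^2/dx^2+\widehat v)_-^\gamma$ on $\R$, the graph behaves like $N$ independent copies of $\R$, and the ratio $\tr H_-^\gamma/\int_{\Gamma_N}V_-^{\gamma+1/2}\,dx$ tends to $\tr(-d^2/dx^2+\widehat v)_-^\gamma/\int\widehat v_-^{\gamma+1/2}\,dx$, which can be made arbitrarily close to $L_\gamma$.

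The main obstacle, and the one genuinely new input beyond the symmetry reduction, is the Dirichlet half-line bound with the \emph{sharp} constant $L_\gamma$ rather than the factor-$2$ constant produced by the naive odd reflection: the extend-by-zero plus Dirichlet-bracketing argument is precisely what recovers the missing factor and makes the arithmetic $2L_\gamma I+(N-2)L_\gamma I=NL_\gamma I$ close up exactly (note it already handles $N=2$, where the Dirichlet term drops out). I would want to verify the bracketing inequality $\tr(-d^2/dx^2+\widetilde v)_-^\gamma\ge\tr (h_{\mathrm D})_-^\gamma$ and the $R\to\infty$ convergence of the half-line eigenvalue moments with some care, since these are where the only analytic subtleties lie.
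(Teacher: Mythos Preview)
Your proof is correct and follows essentially the same route as the paper: reduce $H$ to one Neumann plus $N-1$ Dirichlet half-line operators, pair one Neumann with one Dirichlet via the even extension to $\R$ to get $2L_\gamma I$, and bound the remaining $N-2$ Dirichlet copies by $L_\gamma I$ each; the lower bound via far-translated near-optimizers is also the paper's. The only cosmetic differences are that the paper effects the decomposition through cyclic-group characters rather than the permutation symmetry, and justifies the sharp Dirichlet half-line bound $\tr(h_{\mathrm D})_-^\gamma\le L_\gamma I$ by a one-line appeal to the variational principle rather than your explicit extend-by-zero-plus-bracketing argument.
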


We will prove Theorem \ref{main} in Section \ref{sec:main} and Theorem \ref{main2} in Section \ref{sec:main2}.

\subsection*{Acknowledgement}
The author is grateful to T. Weidl for drawing my attention to Lieb--Thirring inequalities on quantum graphs and helpful comments.


\section{Proof of Theorem \ref{main}}\label{sec:main}

We begin with the proof of Theorem \ref{main} for $N=1$. This is the following bound on the eigenvalues of a half-line Schr\"odinger operator with Neumann boundary conditions. More precisely, this operator is defined via the quadratic form $\int_{\R_+} (|\psi'|^2 + V|\psi|^2)\,dx$ in $L_2(\R_+)$ with form domain $H^1(\R_+)\cap L_2(\R_+,V_+\,dx)$.

\begin{lemma}\label{neumann}
Let $H^{(\mathrm{Neu})}=-\frac{d^2}{dx^2} + V$ in $L_2(\R_+)$ with Neumann boundary conditions. Then, for all $\gamma\geq 1/2$,
$$
\tr \left( H^{(\mathrm{Neu})}\right)_-^\gamma \leq 2 L_\gamma \int_{\R_+} V_-^{\gamma+1/2}\,dx \,.
$$
\end{lemma}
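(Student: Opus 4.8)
The plan is to reduce the Neumann half-line problem to the full-line Lieb--Thirring inequality \eqref{eq:lt} by a reflection (doubling) argument. Given a potential $V$ on $\R_+=[0,\infty)$, I would define its even extension $\widetilde V$ on all of $\R$ by $\widetilde V(x):=V(|x|)$. The key observation is that the Neumann operator $H^{(\mathrm{Neu})}=-d^2/dx^2+V$ on $L_2(\R_+)$ is unitarily equivalent to the restriction of the full-line operator $\widetilde H=-d^2/dx^2+\widetilde V$ on $L_2(\R)$ to the subspace of even functions. Indeed, a function $\psi$ on $\R_+$ satisfying the Neumann condition $\psi'(0+)=0$ corresponds exactly to its even extension on $\R$, which is then in $H^1(\R)$; and the quadratic forms match up to a factor of $2$, since $\int_\R(|\widetilde\psi'|^2+\widetilde V|\widetilde\psi|^2)\,dx = 2\int_{\R_+}(|\psi'|^2+V|\psi|^2)\,dx$ for even $\widetilde\psi$.

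From this unitary equivalence, every negative eigenvalue $-E$ of $H^{(\mathrm{Neu})}$ is also a negative eigenvalue of $\widetilde H$ (namely, the even eigenvalues of $\widetilde H$), so
\[
\tr\bigl(H^{(\mathrm{Neu})}\bigr)_-^\gamma \leq \tr\bigl(\widetilde H\bigr)_-^\gamma \,.
\]
Applying \eqref{eq:lt} to $\widetilde H$ on $\R$ and then using the symmetry of $\widetilde V$ gives
\[
\tr\bigl(\widetilde H\bigr)_-^\gamma \leq L_\gamma \int_\R \widetilde V_-^{\gamma+1/2}\,dx = 2 L_\gamma \int_{\R_+} V_-^{\gamma+1/2}\,dx \,,
\]
which is precisely the claimed bound. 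The even-extension identity for the quadratic form is the mechanism that produces the factor $2$.

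I expect the only genuine subtlety to be justifying the spectral inequality $\tr(H^{(\mathrm{Neu})})_-^\gamma\leq\tr(\widetilde H)_-^\gamma$ at the level of forms rather than assuming smoothness. The clean way is to identify $L_2(\R)$ with its decomposition into even and odd subspaces, both of which reduce $\widetilde H$; the even part is unitarily equivalent (after the factor-$2$ rescaling of the form) to $H^{(\mathrm{Neu})}$, while the odd part contributes only extra negative eigenvalues, so the trace over the even part is dominated by the full trace. One should also confirm that the form domains correspond correctly, i.e.\ that even $H^1(\R)$ functions restrict to $H^1(\R_+)$ and conversely that $H^1(\R_+)$ functions extend evenly to $H^1(\R)$ with no boundary term (no jump in the derivative because the Neumann condition makes the even extension $C^1$ in the weak sense). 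Finally, Remark (3) in the statement indicates that the factor $2$ is sharp precisely when the full-line optimizer has a single bound state, which matches this picture: if the optimal $\widetilde V_-$ produces exactly one (necessarily even, ground-state) eigenvalue, then no negative eigenvalue is lost to the odd subspace and the inequality $\tr(H^{(\mathrm{Neu})})_-^\gamma\leq\tr(\widetilde H)_-^\gamma$ becomes an equality, so this doubling argument should be followed by a matching lower-bound construction to establish sharpness.
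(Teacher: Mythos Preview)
Your proposal is correct and is essentially the same argument as the paper's: both extend $V$ evenly to $\R$ and use that the full-line operator $\tilde H$ decomposes under the even/odd splitting, with the even part unitarily equivalent to $H^{(\mathrm{Neu})}$ and the odd part (the Dirichlet half-line operator $H^{(\mathrm{Dir})}$) contributing a nonnegative term that is simply dropped. The paper states this as the identity $\tr(H^{(\mathrm{Neu})})_-^\gamma + \tr(H^{(\mathrm{Dir})})_-^\gamma = \tr(\tilde H)_-^\gamma$ followed by $\tr(H^{(\mathrm{Dir})})_-^\gamma\geq 0$, which is exactly your ``odd part contributes only extra negative eigenvalues'' observation.
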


\begin{proof}
We extend $V$ to a symmetric function $\tilde V$ on $\R$ and obtain, by the same arguments as in the proof of Theorem \ref{main2} below,
$$
\tr \left( H^{(\text{Neu})}\right)_-^\gamma + \tr \left( H^{(\text{Dir})}\right)_-^\gamma = \tr \left( H^{\R}\right)_-^\gamma \,,
$$
where $H^{(\text{Dir})}$ is the same as $H^{(\text{Neu})}$ but with Dirichlet boundary conditions and $H^\R$ is the operator $-\frac{d^2}{dx^2}+\tilde V$ in $L_2(\R)$. The claimed bound follows from the inequalities $\tr \left( H^{(\text{Dir})}\right)_-^\gamma\geq 0$ and \eqref{eq:lt}, that is,
$$
\tr \left( H^{\R}\right)_-^\gamma \leq L_\gamma \int_\R \tilde V_-^{\gamma+1/2}\,dx = 2 L_\gamma \int_{\R_+} V_-^{\gamma+1/2}\,dx \,.
\qedhere
$$
\end{proof}

We now turn to star graphs $\Gamma_N$ with $N\geq 3$. Lower bounds on the eigenvalues can be obtained by decoupling the edges. If we would decouple all the edges, we would end up with $N$ half-line Schr\"odinger operators with Neumann boundary conditions. Applying Lemma \ref{neumann} to each of these operators we would obtain the bound $L_{\gamma,N}\leq 2L_\gamma$, which is not optimal. The idea in the following proof is to apply a more subtle decoupling.

\begin{proof}[Proof of Theorem \ref{main}]
\textit{Case $N$ even}. We write $N=2n$ and consider the quadratic form $h^{(\text{cut})}[\psi]$, given by the same expression as $h[\psi]$, but with form domain
$$
\left\{ \psi\in L_2(\Gamma_N):\ \forall \ 1\leq j\leq N: \psi_j \in H^1(\R_+) \ \text{and}\ \forall \ 1\leq j \leq n :\ \psi_j(0)=\psi_{j+N}(0) \right\} \,. 
$$
In other words, we decompose $\Gamma_N$ into $n$ copies of $\R$, namely, $e_1\cup e_{n+1},\ldots, e_n\cup e_N$, where $e_1,\ldots, e_N$ are the edges of $\Gamma_N$. Since the form domain of $h^{(\text{cut})}$ contains that of $h$, the corresponding operator $H^{(\text{cut})}$ satisfies $H^{(\text{cut})}\leq H$ in the sense of quadratic forms, and therefore
\begin{equation}
\label{eq:mainproof1}
\tr H_-^\gamma \leq \tr (H^{(\text{cut})})_-^\gamma
\end{equation}
for any $\gamma$. Since for the operator $H^{(\text{cut})}$ each edge is only connected to one other edge, we have
$$
H^{(\text{cut})} \sim \bigoplus_{i=1}^n H_i \,,
$$
where $H_i$ is the Schr\"odinger operator in $L_2(\R)$ with potential $\tilde V_i$ given for $t> 0$ by
$$
\tilde V_i(t) = V_i(t) \,,
\qquad
\tilde V_i(-t) = V_{n+i}(t) \,.
$$
(Here, $V_i$ and $V_{n+i}$ denote the restrictions of $V$ to the $i$-th and $n+i$-th edge.) Thus,
\begin{equation}
\label{eq:mainproof2}
\tr (H^{(\text{cut})})_-^\gamma = \sum_{i=1}^n \tr (H_i)_-^\gamma \,.
\end{equation}
Finally, if $\gamma\geq 1/2$, we can use the Lieb--Thirring inequality \eqref{eq:lt} to bound
\begin{equation}
\label{eq:mainproof3}
\tr (H_i)_-^\gamma \leq L_\gamma \int_\R (\tilde V_i)_-^{\gamma+1/2}\,dt \,.
\end{equation}
Combining \eqref{eq:mainproof1}, \eqref{eq:mainproof2} and \eqref{eq:mainproof3} we obtain for $\gamma\geq 1/2$
$$
\tr H_-^\gamma \leq L_\gamma \sum_{i=1}^n \int_\R (\tilde V_i)_-^{\gamma+1/2}\,dt = L_\gamma \int_{\Gamma_N} V_-^{\gamma+1/2}\,dx \,,
$$
as claimed.

\textit{Case $N$ odd.} We shall show that for $\gamma\geq 1/2$,
\begin{equation}
\label{eq:mainproof4}
\tr H_-^\gamma \leq L_\gamma \int_{\Gamma_N} V_-^{\gamma+1/2}\,dx + L_\gamma \int_{\R_+} (V_N)_-^{\gamma+1/2} \,dt \,.
\end{equation}
After relabelling the edges this yields
$$
\tr H_-^\gamma \leq L_\gamma \int_{\Gamma_N} V_-^{\gamma+1/2}\,dx + L_\gamma \int_{\R_+} (V_i)_-^{\gamma+1/2} \,dt
$$
for any $i=1,\ldots,N$, and summing this inequality over $i$, we obtain
$$
N \tr H_-^\gamma \leq (N+1) L_\gamma \int_{\Gamma_N} V_-^{\gamma+1/2}\,dx\,,
$$
which is the claimed inequality.

Thus, it remains to prove \eqref{eq:mainproof4}. This time we define a quadratic form $h^{(\text{cut})}$ by the same expression as $h[\psi]$ but with form domain
$$
\left\{ \psi\in L_2(\Gamma_N):\ \forall \ 1\leq j\leq N: \psi_j \in H^1(\R_+) \ \text{and}\ \psi_1(0)=\ldots=\psi_{N-1}(0) \right\} \,.
$$
As before, we have \eqref{eq:mainproof1}. Since the $N$-th edge is disconnected from the rest of the edges, we have
$$
H^{(\text{cut})} \sim \tilde H \oplus \tilde H_N \,,
$$
where $\tilde H$ is the operator in $L_2(\Gamma_{N-1})$, which is obtained by ignoring the $N$-th edge, and $\tilde H_N$ is the Schr\"odinger operator in $L_2(\R_+)$ with potential $V_N$ and a Neumann boundary condition. Thus,
\begin{equation}
\label{eq:mainproof5}
\tr (H^{(\text{cut})})_-^\gamma = \tr \tilde H_-^\gamma + \tr (\tilde H_N)_-^\gamma \,.
\end{equation}
Since $N-1$ is even, we have according to Step 1
\begin{equation}
\label{eq:mainproof6}
\tr \tilde H_-^\gamma \leq L_\gamma \sum_{i=1}^{N-1} \int_{\R_+} (V_i)_-^{\gamma+1/2}\,dt \,.
\end{equation}
On the other hand, by Lemma \ref{neumann},
\begin{equation}
\label{eq:mainproof7}
\tr (\tilde H_N)_-^\gamma \leq 2 L_\gamma \int_{\R_+} (V_N)_-^{\gamma+1/2}\,dt \,.
\end{equation}
The claimed inequality \eqref{eq:mainproof4} now follows from \eqref{eq:mainproof1}, \eqref{eq:mainproof5}, \eqref{eq:mainproof6} and \eqref{eq:mainproof7}. This concludes the proof of the theorem.
\end{proof}

A refinement of the previous proof yields

\begin{proposition}\label{mono}
Let $\gamma\geq 1/2$. If $N_0<N$ are both odd, then
$$
L_{\gamma,N} \leq ((N-N_0)/N) L_\gamma + (N_0/N) L_{\gamma,N_0} \,.
$$
In particular, if $L_{\gamma,N_0}=L_\gamma$ for some odd $N_0\in\N$, then $L_{\gamma,N}=L_\gamma$ for all $N\geq N_0$.
\end{proposition}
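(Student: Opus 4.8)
The plan is to adapt the odd-case decoupling from the proof of Theorem \ref{main}, replacing the single cut-off Neumann edge by a larger retained star. Since $N_0 < N$ are both odd, the difference $N - N_0$ is even, so I can pair up the edges not belonging to a distinguished set $S \subseteq \{1,\ldots,N\}$ of size $N_0$. Concretely, I define a cut form $h^{(\text{cut})}$ by the expression of $h$ but with the form domain consisting of all $\psi$ with $\psi_j \in H^1(\R_+)$ for every $j$, subject only to the constraints that the edges indexed by $S$ share a common vertex value and that the remaining $N-N_0$ edges are matched pairwise at the vertex. This form domain contains that of $h$, so $H^{(\text{cut})} \leq H$ and hence $\tr H_-^\gamma \leq \tr (H^{(\text{cut})})_-^\gamma$ exactly as in \eqref{eq:mainproof1}. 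By construction $H^{(\text{cut})}$ splits orthogonally into a $\Gamma_{N_0}$-operator on the edges in $S$ and $(N-N_0)/2$ Schr\"odinger operators on $\R$, one for each matched pair.

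Next I would estimate each summand. The $\Gamma_{N_0}$-piece is bounded using the definition of the optimal constant $L_{\gamma,N_0}$, contributing $L_{\gamma,N_0}$ times the integral of $V_-^{\gamma+1/2}$ over the edges in $S$; each $\R$-piece is bounded by \eqref{eq:lt} with constant $L_\gamma$, as in \eqref{eq:mainproof3}. Combining these, I obtain, for every admissible choice of $S$,
$$
\tr H_-^\gamma \leq L_{\gamma,N_0} \sum_{i \in S} \int_{\R_+} (V_i)_-^{\gamma+1/2}\,dt + L_\gamma \sum_{i \notin S} \int_{\R_+} (V_i)_-^{\gamma+1/2}\,dt .
$$

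The key new step is to symmetrize this family of inequalities by averaging over all $\binom{N}{N_0}$ subsets $S$ of size $N_0$. A given edge $i$ lies in $S$ for exactly $\binom{N-1}{N_0-1}$ choices and outside $S$ for $\binom{N-1}{N_0}$ choices, which after dividing by $\binom{N}{N_0}$ gives weights $N_0/N$ and $(N-N_0)/N$, respectively. Summing the displayed inequality over all $S$ and using $\sum_i \int_{\R_+}(V_i)_-^{\gamma+1/2}\,dt = \int_{\Gamma_N} V_-^{\gamma+1/2}\,dx$ therefore yields
$$
\tr H_-^\gamma \leq \left( \frac{N-N_0}{N}\, L_\gamma + \frac{N_0}{N}\, L_{\gamma,N_0} \right) \int_{\Gamma_N} V_-^{\gamma+1/2}\,dx ,
$$
which is the asserted bound on $L_{\gamma,N}$. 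I expect this averaging to be the only genuinely new ingredient; the binomial bookkeeping is routine, but the step requiring the most care is checking that the parity of $N-N_0$ is precisely what permits the pairwise matching, and hence the orthogonal splitting into $\R$-operators.

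Finally, for the \emph{in particular} statement I would combine the bound with the lower estimate \eqref{eq:comparision0}. If $L_{\gamma,N_0} = L_\gamma$, then for odd $N > N_0$ the inequality collapses to $L_{\gamma,N} \leq L_\gamma$, and \eqref{eq:comparision0} forces equality; for even $N$ equality is already furnished by Theorem \ref{main}; and the case $N = N_0$ is the hypothesis. Hence $L_{\gamma,N} = L_\gamma$ for every $N \geq N_0$. As a consistency check, taking $N_0 = 1$ together with $L_{\gamma,1} \leq 2L_\gamma$ from Lemma \ref{neumann} recovers the odd-$N$ bound $L_{\gamma,N} \leq \frac{N+1}{N} L_\gamma$ of Theorem \ref{main}.
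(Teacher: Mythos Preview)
Your proposal is correct and follows essentially the same approach as the paper: decouple $\Gamma_N$ into a $\Gamma_{N_0}$-piece and an even remainder, bound each piece by $L_{\gamma,N_0}$ and $L_\gamma$ respectively, and then symmetrize over all choices of the distinguished $N_0$ edges. The only cosmetic difference is that the paper keeps the remaining $N-N_0$ edges as a single star $\Gamma_{N-N_0}$ and invokes the even case of Theorem~\ref{main}, whereas you unroll that step and pair those edges directly into copies of $\R$; the resulting inequality and the averaging argument are identical.
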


Note that the bound in Theorem \ref{main} follows by taking $N_0=1$ and using $L_{\gamma,1}\leq 2L_\gamma$ according to Lemma \ref{neumann}.

\begin{proof}
We argue as in the odd $N$ case of Theorem \ref{main} and decouple $\Gamma_N$ into two star graphs $\Gamma_{N_0}$ and $\Gamma_{N-N_0}$. For $\Gamma_{N_0}$ we use the bound with $L_{\gamma,N_0}$ and for $\Gamma_{N-N_0}$ we use the bound with $L_\gamma$ (since $N-N_0$ is even). Finally, we sum over all possible choices of $N_0$ edges, as in the equations after \eqref{eq:mainproof4}. We omit the details.
\end{proof}


\section{Proof of Theorem \ref{main2}}\label{sec:main2}

We now turn our attention to radial potentials $V$ on $\Gamma_N$ and show that the constant $L_{\gamma,N}^{(\rad)}$ coincides with the optimal one-dimensional constant $L_{\gamma}$. This holds both for even and odd $N$.

The symmetry of $\Gamma_N$ allows one to construct an orthogonal decomposition of the space $L_2(\Gamma_N)$ which reduces the Kirchhoff Laplacian. If, in addition, $V$ is radial, it also reduces the  operator $H$. The study of the spectrum of $H$ is then reduced to the study of the spectrum of the orthogonal components in the decomposition, where each component can be identified with a Schr\"odinger operator acting in the space $L_2(\R_+).$ 

In \cite{Sololo, solnai, FK} a decomposition of the $L_2$ space was given for so-called regular, rooted metric trees. In what follows, we reformulate the decomposition of $L_2({\Gamma_N})$ for our purposes.
We denote by $\mathcal{H}^{(0)}$ the closed subspace of $L_2 (\Gamma_N)$ of all radial functions on $\Gamma_N$, i.e.,
$$
\mathcal{H}^{(0)}:= \{ \psi \in L_2(\Gamma_N): \forall r\geq 0 :\ \psi_1(r) =  \psi_2(r) = \ldots = \psi_N(r) \},
$$
where $\psi_j := \psi|_{e_j}$. Any radial function $\psi$ on $\Gamma_N$ can be identified with the function $s:= R \psi$ on the half-line $[0 , \infty)$ such that $\psi(x) = s(|x|)$ for each $ x \in  \Gamma_N,$ and

$$
\int_{\Gamma_N} | \psi (x)|^2 \,dx = N \int_0^\infty |s(x)|^2 \,dx,\quad \psi \in \mathcal{H}^{(0)}, s= R\psi.
$$
Thus, the operator $\sqrt{N} R$ defines an isometry of the subspace $\mathcal{H}^{(0)}$ onto the space $L_2(\R_+)$.

To state the orthogonal decomposition of $L_2(\Gamma_N)$ we define for $1 \leq \ell \leq N-1$, the following orthogonal subspaces,
\begin{eqnarray}\nonumber
\mathcal{H}^{(\ell)}&:=& \{ \psi \in L_2(\Gamma_N): \forall j=1,\ldots,N,\ \forall r\geq 0:\ \psi_{j+1}(r) = e^{i2\pi(\ell /N)} \psi_{j}(r) \}.
\end{eqnarray}
(Here, we write $\psi_{N+1}=\psi_1$.) Clearly, as for $\ell=0$ there are isometries from $H^{(\ell)}$ onto $L_2(\R_+)$.

\begin{lemma}
The subspaces $\mathcal{H}^{(\ell)},\ \ell=0, \ldots, N-1$, are mutually orthogonal and 
\begin{equation}\label{decomp}
L_2(\Gamma_N) = \bigoplus_{\ell=0}^{N-1} \mathcal{H}^{(\ell)}.
\end{equation}
\end{lemma}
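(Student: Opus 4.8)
The plan is to recognize the decomposition as the fiberwise discrete Fourier transform in the edge variable. First I would identify $L_2(\Gamma_N)$ isometrically with the space $L_2(\R_+;\C^N)$ of $\C^N$-valued square-integrable functions on the half-line, by sending $\psi$ to the vector $r\mapsto(\psi_1(r),\ldots,\psi_N(r))$ of its edge restrictions. Writing $\omega:=e^{2\pi i/N}$ and $v^{(\ell)}:=(1,\omega^\ell,\omega^{2\ell},\ldots,\omega^{(N-1)\ell})\in\C^N$, the defining relation $\psi_{j+1}=\omega^\ell\psi_j$ shows that under this identification $\mathcal{H}^{(\ell)}$ is exactly the set of functions $r\mapsto s(r)\,v^{(\ell)}$ with $s\in L_2(\R_+)$; that is, $\mathcal{H}^{(\ell)}$ consists precisely of the $L_2$ functions taking values in the single line $\C v^{(\ell)}\subset\C^N$.

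With this picture in place, both assertions reduce to the elementary linear algebra of the roots-of-unity vectors $v^{(\ell)}$. For the orthogonality of the subspaces I would compute, for $\psi\in\mathcal{H}^{(\ell)}$ and $\phi\in\mathcal{H}^{(m)}$ with $\ell\neq m$,
$$
\langle\psi,\phi\rangle=\sum_{j=1}^N\int_0^\infty \psi_j\,\overline{\phi_j}\,dr=\left(\sum_{k=0}^{N-1}\omega^{(\ell-m)k}\right)\int_0^\infty s\,\overline{t}\,dr \,,
$$
where $s,t$ are the generating half-line functions of $\psi,\phi$. Since $\omega^{\ell-m}\neq1$, the geometric sum $\sum_{k=0}^{N-1}\omega^{(\ell-m)k}$ vanishes, giving $\langle\psi,\phi\rangle=0$. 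Equivalently, this is just the statement $\langle v^{(\ell)},v^{(m)}\rangle_{\C^N}=N\delta_{\ell m}$, i.e. that $\{v^{(\ell)}\}_{\ell=0}^{N-1}$ is an orthogonal basis of $\C^N$.

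For the completeness I would exploit that same orthogonal basis: since $\C^N=\bigoplus_{\ell=0}^{N-1}\C v^{(\ell)}$, decomposing an arbitrary $\psi\in L_2(\Gamma_N)$ pointwise in $r$ along this basis and integrating yields $L_2(\R_+;\C^N)=\bigoplus_\ell L_2(\R_+;\C v^{(\ell)})=\bigoplus_\ell\mathcal{H}^{(\ell)}$. Concretely, the orthogonal projection onto $\mathcal{H}^{(\ell)}$ is given by the $\ell$-th discrete Fourier coefficient $s_\ell(r):=\frac1N\sum_{j=1}^N\omega^{-\ell(j-1)}\psi_j(r)$, with $\psi=\sum_\ell\psi^{(\ell)}$ and $\psi^{(\ell)}_j:=\omega^{\ell(j-1)}s_\ell$; the reconstruction identity $\frac1N\sum_{\ell=0}^{N-1}\omega^{\ell(j-1)}\omega^{-\ell(j'-1)}=\delta_{jj'}$ recovers $\psi_j$ exactly.

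The argument involves no genuine obstacle; it is finite-dimensional Fourier analysis fibered over the half-line. The only points requiring a little care are the bookkeeping of complex conjugates and normalization factors (the factor $N$ relating $\|\psi\|_{L_2(\Gamma_N)}^2$ to $\int_0^\infty|s|^2\,dr$, already recorded for $\ell=0$ in the text) and confirming that each projection $s_\ell$ again lies in $L_2(\R_+)$, both of which are immediate from the boundedness of the finite discrete Fourier matrix.
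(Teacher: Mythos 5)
Your proof is correct and takes essentially the same route as the paper: both decompose $L_2(\Gamma_N)$ via the discrete Fourier transform in the edge index, proving orthogonality and completeness through the vanishing geometric sums $\sum_{k=0}^{N-1}\omega^{(\ell-m)k}=0$. In fact your explicit projection formula $\psi^{(\ell)}_j=\omega^{\ell(j-1)}s_\ell$ with $s_\ell=\frac1N\sum_j\omega^{-\ell(j-1)}\psi_j$ is the correct form of the paper's formula for $\psi^{(\ell)}_k$, which as printed omits the dependence of the phase on the edge indices and, taken literally, would not produce elements of $\mathcal{H}^{(\ell)}$.
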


\begin{proof}
First, we show that $ L_2(\Gamma_N) = \mbox{span} \ \{\mathcal{H}^{(\ell)}: \ell \}$, i.e., for every function $\psi \in L_2(\Gamma_N)$ there are functions $\psi^{(\ell)} \in \mathcal{H}^{(\ell)}$ such that $\psi= \sum_{\ell=0}^{N-1} \psi^{(\ell)}$. (Note that for $N=2$ this corresponds to the fact that every function on the real line is given as a sum of even and odd functions.)

We can write $\psi= \sum_{\ell=0}^{N-1} \psi^{(\ell)}$, where the functions $\psi^{(\ell)}$ are defined via their restrictions $\psi^{(\ell)}_k$ to the $k$-th edge, $k=1,\ldots,N$, by
$$
\psi_k^{(0)}(t) = \frac 1N \sum_{j=1}^N \psi_j(t)
$$
and, for $\ell=1,\ldots, N-1$,
$$
\psi_k^{(\ell)} = \frac 1N \left(\psi_k(t) +  \sum_{j \neq k} e^{i2\pi \ell/N} \psi_j(t) \right) \,.
$$
The identity $\psi= \sum_{\ell=0}^{N-1} \psi^{(\ell)}$ follows from the fact that
 $$
 \sum_{\ell =0}^{N-1} e^{i2 \pi \ell/N} =  \sum_{\ell =0}^{N-1} \left(e^{i2 \pi/N} \right)^{\ell}
 =\frac{\left(e^{i2 \pi/N}\right)^N-1}{ \left(e^{i2 \pi/N}\right) -1}=   0 \,.
 $$
Moreover, it is easy to verify that $\psi^{(\ell)}\in\mathcal H^{(\ell)}$.

To prove the lemma, it remains to show that the spaces $\mathcal{H}^{(\ell)}, \ 0 \leq \ell \leq N-1,$ are mutually orthogonal. For $\psi^{(\ell)} \in \mathcal{H}^{(\ell)}$ and  $\psi^{(m)} \in \mathcal{H}^{(m)}$ with $\ell \neq m$ consider
\begin{align*}
\int_{\Gamma} \psi^{(\ell)} \overline{\psi^{(m)}} \,dx 
& = \sum_{j=1}^N \int_{\R_+} \psi_j^{(\ell)} \overline{\psi_j^{(m)}} \,dt = \sum_{j=1}^N  \int_{\R_+} e^{2i \pi \ell (j-1)/N} \psi_1^{(\ell)} e^{-2i \pi m (j-1)/N} \overline{\psi_1^{(m)}}
\,dt \\
&=  \int_{\R_+}  \psi_1^{(\ell)} \overline{\psi_1^{(m)}} \,dt\ \sum_{j=1}^N \left( e^{2i\pi(\ell-m)/N} \right)^{j-1}.
\end{align*}
The right-hand side equals zero since
$$
\sum_{j=1}^N \left( e^{2i\pi(\ell-m)/N} \right)^{j-1} = \sum_{j=0}^{N-1} \left( e^{2i\pi(\ell-m)/N} \right)^j = \frac{\left(e^{i2 \pi (\ell -m) / N}\right)^N-1}{ \left(e^{i2 \pi (\ell -m) /N}\right) -1}=   0.
$$
Hence, the spaces $\mathcal{H}^{(\ell)}, \ 0 \leq \ell \leq N-1,$ are mutually orthogonal, as claimed.
\end{proof}

A function in $\mathcal H^{(\ell)}$ is completely determined by its restriction to one of the edges. We now characterize the $H^1(\Gamma_N)$ property of a function in $\mathcal H^{(\ell)}$ in terms of its restrictions. Clearly, a function in $\mathcal H^{(0)}$ belongs to $H^1(\Gamma_N)$ iff its restrictions belong to $H^1(\R_+)$. On the other hand, a function $\psi\in \mathcal{H}^{(\ell)}$ with $\ell=1,\ldots,N-1$ belongs to $H^1(\Gamma_N)$ iff its restrictions belong to $H^{1,0}(\R_+) = \{ \psi \in H^1(\R_+) : \psi(0) = 0 \}$. The crucial point here is the Dirichlet boundary condition at the origin. Moreover, we have
$$
\int_{\Gamma_N} |\psi'|^2\,dx = \sum_{\ell=0}^{N-1} \int_{\Gamma_N} |(\psi^{(\ell)})'|^2\,dx \,, 
$$
where $\psi^{(\ell)}$ denotes the projection of $\psi$ onto $\mathcal H^{(\ell)}$.

We conclude that the subspaces $\mathcal H^{(l)}$ reduce the Schr\"odinger operator $H$ and that the operators $H |_{ \mathcal{H}^{(\ell)}}$ are unitarily equivalent to operators $H^{(\ell)}$ in $L_2(\R_+)$. These operators act as $-\frac{d^2}{dx^2} + V(x)$ and have Neumann (if $\ell=0$) and Dirichlet (if $\ell=1,\ldots,N-1$) boundary conditions. Here we identify the radial function $V$ on $\Gamma_N$ in a natural way with a function on $\R_+$. (More precisely, the operators $H^{(\ell)}$ are defined via the quadratic form $\int_{\R_+} (|\psi'|^2 + V|\psi|^2 )\,dx$ with form domain $H^1(\R_+)$ for $\ell=0$ and $H^{1,0}(\R_+)$ for $\ell=1,\ldots, N-1$.) Clearly, the operators $H^{(\ell)}$ for $\ell=1,\ldots,N-1$ coincide.

To summarize, the operator $H$ in $L_2(\Gamma_N)$ is unitary equivalent to the orthogonal sum of the operators $H^{(\ell)}$ on $ L_2(\R_+)$,
\begin{equation}\label{druck}
H \sim  \bigoplus_{\ell=0}^{N-1} H^{(\ell)} \,,
\end{equation}
and therefore its eigenvalues, counting multiplicities, are given by the union of the eigenvalues of $H^{(\ell)}$, counting multiplicities. Then, for any $\gamma$,
\begin{equation}
\label{eq:evsumradial0}
\tr H_-^\gamma = \tr \left( H^{(0)} \right)_-^\gamma + (N-1) \tr \left( H^{(1)} \right)_-^\gamma \,.
\end{equation}

Consider now the Schr\"odinger operator
$$
\tilde H = -\frac{d^2}{dx^2} + \tilde{V}
\qquad\text{in}\ L_2(\R) \,,
$$
where the potential $\tilde{V}$ is the symmetric extension of the potential $V|_{e_j}$ to the whole-line. The unitary equivalence \eqref{druck} with $N=2$ implies that $\tilde H \sim H^{(0)} \oplus H^{(1)}$. Reinserting this into \eqref{druck} we find
$$
H \sim \tilde H \oplus  \bigoplus_{\ell=2}^{N-1} H^{(\ell)},
$$
and hence
\begin{equation}
\label{eq:evsumradial}
\tr H_-^\gamma = \tr \left( \tilde H \right)_-^\gamma + (N-2) \tr \left( H^{(1)} \right)_-^\gamma \,.
\end{equation}
This is the key identity in the radial case.

According to the Lieb--Thirring inequality \eqref{eq:lt}, for the first trace on the right side of \eqref{eq:evsumradial} and $\gamma\geq 1/2$ we have
$$
\tr \left( \tilde H \right)_-^\gamma \leq
L_\gamma \int_\R (\tilde V)_-^{\gamma+1/2} \,dx =
2 L_\gamma \int_{\R_+} V_-^{\gamma+1/2} \,dx \,.
$$
On the other hand, by the variational principle, inequality \eqref{eq:lt} remains also true for the eigenvalues of Dirichlet half-line operators, and therefore for the second trace on the right side of \eqref{eq:evsumradial} we have
$$
\tr \left( H^{(1)} \right)_-^\gamma \leq L_\gamma \int_{\R_+} V_-^{\gamma+1/2} \,dx \,.
$$
Thus, the right side of \eqref{eq:evsumradial} is bounded from above by
$$
N L_\gamma \int_{\R_+} V_-^{\gamma+1/2} \,dx = L_\gamma \int_{\Gamma_N} V_-^{\gamma+1/2}\,dx \,,
$$
which proves the bound $L^{(\rad)}_{\gamma,N}\leq L_\gamma$ claimed in Theorem 2.

Conversely, for any $\epsilon>0$ there is a compactly supported $V$ on $\R$ such that
\begin{equation}
\label{eq:ltopt}
\tr \left( -\frac{d^2}{dx^2} + V \right)_-^\gamma \geq (1-\epsilon) L_{\gamma} \int_{\R} V_-^{\gamma+1/2}\,dx \,.
\end{equation}
We denote by $V_a(x)=V(x-a)$ the translate of this potential and choose $a$ so large that the support of $V_a$ is contained in $\R_+$. We use $V_a$ as a radial potential on $\Gamma_N$ and denote the corresponding operator by $H_a$ and its parts on $\mathcal H^{(0)}$ and $\mathcal H^{(1)}$ by $H^{(0)}_a$ and $H^{(1)}_a$, respectively. It is easy to see that as $a\to\infty$,
$$
\frac{\tr\left( H^{(0)}_a \right)_-^\gamma}{\tr \left( -\frac{d^2}{dx^2} + V_a \right)_-^\gamma} \to 1
\qquad\text{and}\qquad
\frac{\tr\left( H^{(1)}_a \right)_-^\gamma}{\tr \left( -\frac{d^2}{dx^2} + V_a \right)_-^\gamma} \to 1 \,.
$$
On the other hand, by translation invariance, $\tr \left( -\frac{d^2}{dx^2} + V_a \right)_-^\gamma= \tr \left( -\frac{d^2}{dx^2} + V \right)_-^\gamma$ and $\int_{\Gamma_N} (V_a)_-^{\gamma+1/2}\,dx = N \int_{\R} V_-^{\gamma+1/2}\,dx$. Therefore \eqref{eq:evsumradial0} and \eqref{eq:ltopt} yield
$$
\liminf_{a\to\infty} \frac{\tr (H_a)_-^\gamma}{\int_{\Gamma_N} (V_a)_-^{\gamma+1/2}\,dx} \geq (1-\epsilon) L_\gamma \,.
$$
This proves $L^{(\rad)}_{\gamma,N}\geq (1-\epsilon) L_\gamma$ and, since $\epsilon>0$ is arbitrary, we obtain $L^{(\rad)}_{\gamma,N}\geq L_\gamma$. This concludes the proof of the theorem.
\qed

\bibliographystyle{plain}
\bibliography{diss.bib}

\def\cprime{$'$}
\begin{thebibliography}{10}

\bibitem{BK}
Gregory Berkolaiko, Robert Carlson, Stephen~A. Fulling, and Peter Kuchment,
  editors.
\newblock {\em Quantum {G}raphs and {T}heir {A}pplications}, volume 415 of {\em
  Contemporary Mathematics}, Providence, RI, 2006. American Mathematical
  Society.

\bibitem{BeKu}
Gregory Berkolaiko and Peter Kuchment.
\newblock {\em Introduction to quantum graphs}, volume 186 of {\em Mathematical
  Surveys and Monographs}.
\newblock American Mathematical Society, Providence, RI, 2013.

\bibitem{DH}
Semra Demirel and Evans~M. Harell~II.
\newblock On semiclassical and universal inequalities for eigenvalues of
  quantum graphs.
\newblock {\em Rev. Math. Phys.}, 22:305--329, 2010.

\bibitem{EFK}
Tomas Ekholm, Rupert~L. Frank, and Hynek Kova{\v{r}}{\'{\i}}k.
\newblock Eigenvalue estimates for {S}chr\"odinger operators on metric trees.
\newblock {\em Adv. Math.}, 226(6):5165--5197, 2011.

\bibitem{EKKST}
Pavel Exner, Jonathan~P. Keating, Peter Kuchment, Toshikazu Sunada, and
  Alexander Teplyaev, editors.
\newblock {\em Analysis on graphs and its applications}, volume~77 of {\em
  Proceedings of Symposia in Pure Mathematics}.
\newblock American Mathematical Society, Providence, RI, 2008.
\newblock Papers from the program held in Cambridge, January 8--June 29, 2007.

\bibitem{ExLaUs}
Pavel Exner, Ari Laptev, and Muhammad Usman.
\newblock On some sharp spectral inequalities for {S}chr\"odinger operators on
  semiaxis.
\newblock {\em Comm. Math. Phys.}, 326(2):531--541, 2014.

\bibitem{FK}
Rupert~L. Frank and Hynek Kova{\v{r}}{\'{\i}}k.
\newblock Heat kernels of metric trees and applications.
\newblock {\em SIAM J. Math. Anal.}, 45(3):1027--1046, 2013.

\bibitem{HLThom}
Dirk Hundertmark, Elliott~H. Lieb, and Lawrence~E. Thomas.
\newblock A sharp bound for an eigenvalue moment of the one-dimensional
  {S}chr\"odinger operator.
\newblock {\em Adv. Theor. Math. Phys.}, 2(4):719--731, 1998.

\bibitem{H}
Norman~E. Hurt.
\newblock {\em Mathematical physics of quantum wires and devices}, volume 506
  of {\em Mathematics and its Applications}.
\newblock Kluwer Academic Publishers, Dordrecht, 2000.
\newblock From spectral resonances to Anderson localization.

\bibitem{LW}
Ari Laptev and Timo Weidl.
\newblock Sharp {L}ieb-{T}hirring inequalities in high dimensions.
\newblock {\em Acta Math.}, 184(1):87--111, 2000.

\bibitem{LT}
Elliott~H. Lieb and Walter Thirring.
\newblock Inequalities for the moments of the eigenvalues of the
  {S}chr\"odinger {H}amiltonian and their relation to {S}obolev inequalities.
\newblock {\em Studies in Mathematical Physics, Princeton University Press,
  Princeton, NJ}, pages 269--303, 1976.

\bibitem{solnai}
K.~Naimark and M.~Solomyak.
\newblock Eigenvalue estimates for the weighted {L}aplacian on metric trees.
\newblock {\em Proc. London Math. Soc. (3)}, 80(3):690--724, 2000.

\bibitem{Sololo}
Michael Solomyak.
\newblock On the spectrum of the {L}aplacian on regular metric trees.
\newblock {\em Waves Random Media}, 14(1):S155--S171, 2004.
\newblock Special section on quantum graphs.

\end{thebibliography}

\end{document}